\def\CC{{\mathbb C}}
\def\NN{{\mathbb N}}
\def\QQ{{\mathbb Q}}
\def\RR{{\mathbb R}}
\def\TT{{\mathbb T}}
\def\ZZ{{\mathbb Z}}
\def\Xcal{{\mathcal X}}
\def\sup{\mathrm{sup}}
\def\max{\mathrm{max}}
\newcommand{\ndiv}{\hspace{-4pt}\not|\hspace{2pt}} 
\theoremstyle{plain}
\newtheorem{thm}{Theorem}
\newtheorem{cor}[thm]{Corollary}
\newtheorem{prop}[thm]{Proposition}
\newtheorem{lem}[thm]{Lemma}
\theoremstyle{definition}
\newtheorem{dfn}{Definition}
\newtheorem{rem}{Remark}
\title[A $p$-adic LeVeque-type Inequality]{A LeVeque-type Inequality on the ring of $p$-adic integers}
\author{Naveen Somasunderam}
\address{Naveen Somasunderam; Department of Mathematics; Oregon State University; Corvallis OR 97331 U.S.A.}
\email{somasunn@math.oregonstate.edu}
\date{April 02, 2019.}
\keywords{Equidistribution, Discrepancy, $p$-adic and non-Archimedean fields, LeVeque Inequality}
\subjclass[2010]{11B99,	11K38 , 11S82, 37P99, 43A75}
\begin{document}

\maketitle
\begin{abstract}
We derive an inequality on the discrepancy of sequences on the ring of $p$-adic integers $\ZZ_p$ using techniques from Fourier analysis. The inequality is used to obtain an upper bound on the discrepancy of the sequence $\alpha_n = na +b$, where $a$ and $b$ are elements of $\ZZ_p$. This is a $p$-adic analogue of the classical LeVeque inequality on the circle group $\RR/\ZZ$.   
\end{abstract}

\section{Introduction}
\label{section: introduction}
The theory of  equidistribution of sequences modulo one was initiated by Hermann Weyl in $1916$. Since then, it has spurred a lot of interest in many areas of mathematics, including number theory, harmonic analysis, and ergodic theory. The standard reference in this subject is Kuipers and Niederreiter \cite{book:MR0419394}. 

Equidistribution of sequences on the ring of $p$-adic integers was previously studied in \cite{article:MR0237441, article:MR0245528, article:MR0147467}. In particular, Cugiani in \cite{article:MR0147467} defines equidistribution and shows that the sequence $na + b$ is equidistributed if $a$ is a unit. Beer does a quantitative analysis in \cite{article:MR0237441} and \cite{article:MR0245528}. 
Our aim is to derive a LeVeque-type inequality on the discrepancy of a finite sequence using Fourier analysis.

Let $| \cdot |_p$ denote the $p$-adic absolute value on $\QQ_p$, and let
\[ \ZZ_p = \{ x \,| \,\,  |x|_p \leq 1    \},
\]  
be the ring of $p$-adic integers. Any element of $\ZZ_p$ can be given a unique canonical expansion of the form $x = a_0 + a_1p + a_2p^2 + ....$, where the $a_i$ are elements of $\{0,1,2,...,p-1\}$ (see for example \cite{book:MR1488696,book:MR2298943}). 

For $k \geq 0$, and $a \in \ZZ_p$,  we denote by 
\begin{eqnarray*}
D(a,1/p^k) &=& \{ x \,| \,\,  |x-a|_p \leq 1/p^k\} \cr
          &=&  a + p^k\ZZ_p,
\end{eqnarray*}
 a disc of radius $1/p^k$ centered at $a$. Note that $\ZZ_p$ can be written as the union of $p^k$ disjoint discs of the form
\[    
     \ZZ_p = \bigcup_{j=0}^{p^k-1} D(j,1/p^k). 
\]
Hence, it is natural to define a notion of equidistribution using such sets. 
\begin{dfn}
\label{dfn: equidist}
A sequence $\{ \alpha_n \}$ is said to be equidistributed in $\ZZ_p$ if for every $a$ in $\ZZ_p$ and every $k \in \NN $, we have 
\[ \lim_{N \to \infty} \left|  \frac{\left|D(a,1/p^k) \cap \{\alpha_1,...,\alpha_N \} \right|}{N}  -  \frac{1}{p^k} \right| = 0.
\]
\end{dfn}
That is, the proportion of the first $N$ elements of $\{ \alpha_n\}$ lying in a disc $D(a,1/p^k)$ is equal to its measure in the limit of large $N$, and this holds true for all such discs.

This definition of equidistribution in $\ZZ_p$ was first given by Cugiani in \cite{article:MR0147467}, where propositions \ref{prop: Weyl} and \ref{prop: linear-sequence} were also proved. The details are also given in Kuipers and Niederreiter \cite{book:MR0419394}. One also wants to measure how well a sequence distributes itself. To this end, we define the notion of discrepancy to quantify the idea that some sequences are better equidistributed than others.
\begin{dfn}
\label{dfn: discrepancy}
The discrepancy of a finite sequence $\{\alpha_1,\alpha_2,...,\alpha_N \}$ in $\ZZ_p$ is 
\[  D_N  = \sup_{a \in \ZZ_p, \, k \in \NN} \left| \frac{\left|D(a,1/p^k) \cap \{\alpha_1,...,\alpha_N \} \right|}{N} - \frac{1}{p^k} \right|. 
\]
\end{dfn}
Some elementary arguments show that 
\[   \frac{1}{N}  \leq D_N \leq 1.
\]
The main aim of this paper is to prove a Fourier analytic upper bound on the discrepancy of a set of $N$ elements $\{\alpha_1,\alpha_2,...,\alpha_N \}$  in $\ZZ_p$. 
 
 Let $\ZZ(p^\infty)$ denote the Pr\"{u}fer $p$-group, the group of all $p$-th power roots of unity in $\CC$. Suppose that $\zeta \in \ZZ(p^\infty)$ has order $p^n$, and let $x \in \ZZ_p$ have the canonical expansion $x = a_0 + a_1p + a_2p^2 +.... + a_{n}p^{n} + .....$. Then we interpret the notation $\zeta^x$ as
\[     \zeta^x =  \zeta^{a_0 + a_1p + a_2p^2 +.... + a_{n-1}p^{n-1}}.
\]
Every element of $\ZZ(p^\infty)$ has finite order, and we denote the order of $\zeta \in \ZZ(p^\infty)$ by $\|\zeta\|$. 
\begin{thm}[Main Theorem] 
\label{thm: Main}
The discrepancy of a finite sequence $\{\alpha_1,...,\alpha_N\}$ in $\ZZ_p$ is bounded by 
\begin{eqnarray*}
D_N &\leq & C(p) 
    \left(\sum_{\zeta \in \ZZ(p^\infty)\backslash \{1\}  }   \frac{1}{\|\zeta \|^3} \left|\frac{1}{N} \sum_{n=1}^{N} \zeta^{\alpha_n} \right|^2 \right)^{\frac{1}{4}},
\end{eqnarray*}
where $C(p)$ is a constant dependent on $p$.
\end{thm}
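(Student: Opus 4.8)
The plan is to mirror the classical LeVeque strategy, which rests on Parseval's identity and a comparison between the $L^2$ and $L^\infty$ norms of the counting discrepancy function. The group $\ZZ_p$ is a compact abelian group whose Pontryagin dual is exactly the Pr\"ufer $p$-group $\ZZ(p^\infty)$, so the characters $x \mapsto \zeta^x$ for $\zeta \in \ZZ(p^\infty)$ form an orthonormal basis for $L^2(\ZZ_p)$ with respect to normalized Haar measure. First I would introduce the ``discrepancy function'' $g(x)$ that records, for each point $x \in \ZZ_p$, the signed deviation of the empirical count from the uniform measure on the appropriate disc containing $x$; concretely, one builds a function measuring $\frac{1}{N}\#\{n : \alpha_n \in D\} - \mu(D)$ as the disc $D$ ranges over the relevant scales. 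The key is that the supremum defining $D_N$ should be controllable by an $L^2$ or $L^4$ average of such a function.

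Next I would compute the Fourier coefficients of $g$. The Fourier transform of the indicator of a disc $D(a,1/p^k)$ is explicit and supported on characters $\zeta$ with $\|\zeta\| \le p^k$, and by design the nontrivial coefficients of $g$ will be weighted sums of the exponential sums $S(\zeta) := \frac{1}{N}\sum_{n=1}^N \zeta^{\alpha_n}$, each carrying a factor roughly $1/\|\zeta\|$ coming from integrating the indicator against the character. Applying Parseval then yields
\[
\int_{\ZZ_p} |g(x)|^2 \, dx \;=\; \sum_{\zeta \ne 1} |\widehat{g}(\zeta)|^2 \;\lesssim\; \sum_{\zeta \ne 1} \frac{1}{\|\zeta\|^2}\,|S(\zeta)|^2,
\]
so the weight $1/\|\zeta\|^2$ appears naturally from the Fourier side of the disc indicators.

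The heart of the argument is to dominate the sup-norm $D_N$ by this $L^2$ quantity, and this is where the exponent $1/4$ and the extra power of $\|\zeta\|$ enter. The plan is to interpolate: pointwise, $D_N \le \|g\|_\infty$, but to pass from $L^2$ control to sup control one cannot simply use Parseval, since a function can be large on a small set while having small $L^2$ norm. Instead I would bound $\|g\|_\infty^2$ (or the relevant sup) by a product involving $\|g\|_2$ and a control on the ``smoothness'' or total variation of $g$, such as $\|g\|_\infty^4 \lesssim \|g\|_2^2 \cdot \|g'\|_2^2$ in spirit, where the derivative-type quantity contributes the additional factor of $1/\|\zeta\|$ and turns the $L^2$ bound into an $L^4$-type bound with the cubic weight $1/\|\zeta\|^3$. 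Concretely, one estimates a quantity like $\sum_\zeta \|\zeta\|^{-1}|S(\zeta)|^2 \cdot \sum_\zeta \|\zeta\|^{-3}|S(\zeta)|^2$ and takes the fourth root, mimicking how the classical LeVeque inequality arises from Parseval applied to $g$ together with its integral.

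The main obstacle I anticipate is getting the correct non-Archimedean analogue of the interpolation/Sobolev step: on $\RR/\ZZ$ one uses the derivative and the fundamental theorem of calculus to relate sup-norm to $L^2$-norm, but $\ZZ_p$ is totally disconnected, so there is no honest derivative. I would instead exploit the ultrametric/tree structure: the discrepancy function $g$ is locally constant on discs, and the sup over all discs of radius $1/p^k$ can be written as a sum of increments across the levels of the tree of discs. Controlling this telescoping sum, and extracting from it the extra factor $1/\|\zeta\|$ that upgrades the weight from $1/\|\zeta\|^2$ to $1/\|\zeta\|^3$, is the delicate computation; the constant $C(p)$ will absorb the convergence of the resulting geometric series in $p$. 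Once the two weighted sums are in hand, a Cauchy--Schwarz step and the fourth root produce the stated bound.
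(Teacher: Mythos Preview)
Your high-level plan (Parseval plus a sup-to-$L^2$ comparison) is the right skeleton, but the implementation you sketch diverges from the paper at exactly the point you flag as the obstacle, and your proposed workaround is not how the paper resolves it.

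You work with a one-variable discrepancy function $g(x)$ and hope to recover the exponent $1/4$ and the weight $\|\zeta\|^{-3}$ via a Sobolev-type interpolation $\|g\|_\infty^4 \lesssim \|g\|_2^2\,\|g'\|_2^2$, with the ``derivative'' replaced by some telescoping over the tree of discs. As you suspect, there is no clean $p$-adic substitute for the fundamental theorem of calculus step that powers the classical LeVeque argument, and your proposal leaves this step unspecified. The paper avoids the issue entirely by a different device: it introduces a \emph{two-variable} function
\[
f(x,y)=\frac{1}{N}\sum_{n=1}^N \Xcal_{D(x,|y|_p)}(\alpha_n)-|y|_p
\]
on $\ZZ_p\times\ZZ_p$, so that the radius is itself a variable of integration. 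The extra Fourier variable $\omega$ dual to $y$ is what manufactures the additional factor of $\|\zeta\|^{-1}$: after Parseval in both variables, the $\omega$-sum of $\bigl|\int_{|y|\le 1/\|\zeta\|}|y|\,\omega^{-y}\,d\mu\bigr|^2$ is bounded by a constant times $\|\zeta\|^{-3}$, yielding the cubic weight directly with no interpolation.

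Likewise, the bound $D_N^4\lesssim \|f\|_2^2$ is not obtained by any smoothness or Cauchy--Schwarz argument. It is purely geometric: if $|f(x_0,y_0)|=\Delta$, the ultrametric forces $f$ to stay of size $\gtrsim\Delta$ on a product set of $(x,y)$ of measure $\gtrsim\Delta^2$ (roughly, all centers $x$ in the same disc and all $y$ of a nearby absolute value), so integrating gives $\|f\|_2^2\gtrsim\Delta^4$. This replaces the antiderivative trick from $\RR/\ZZ$ and is the step your outline is missing.
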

As an example application of Theorem \ref{thm: Main}, we have the following corollary
\begin{cor}
\label{cor: Linear-sequence}
The sequence $na + b$ where $a$ is a unit in  $\ZZ_p$ has discrepancy
\[   D_N   = O\left( N^{-1/2} \right). 
\]
\end{cor}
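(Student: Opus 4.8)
The plan is to apply Theorem \ref{thm: Main} directly to $\alpha_n = na + b$ and to show that the Fourier sum on the right-hand side is $O(N^{-2})$; taking fourth roots then yields $D_N = O(N^{-1/2})$. First I would record that every $\zeta \in \ZZ(p^\infty)\setminus\{1\}$ has order $\|\zeta\| = p^m$ for a unique $m \ge 1$ and can be written $\zeta = e^{2\pi i t/p^m}$ with $\gcd(t,p)=1$ and $1 \le t \le p^m - 1$. Since $\zeta^x$ depends only on $x \bmod p^m$ and defines a character of $(\ZZ_p,+)$, the homomorphism property lets me factor $\zeta^{\alpha_n} = \zeta^{b}(\zeta^a)^n$. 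Writing $\eta = \zeta^a$, the inner sum becomes a geometric series $\sum_{n=1}^N \eta^n$, and because $|\zeta^b| = 1$ the prefactor drops out of every absolute value. The key structural point is that $a$ being a unit forces $a \bmod p^m$ to be invertible in $\ZZ/p^m\ZZ$, so $\eta = e^{2\pi i s/p^m}$ with $s = t\,(a \bmod p^m)$ again coprime to $p$; in particular $\eta \ne 1$, so the geometric sum is genuinely nondegenerate.

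Next I would bound the geometric sum uniformly in $N$ by
\[
    \left|\sum_{n=1}^N \eta^n\right| = \frac{|\eta^N - 1|}{|\eta - 1|} \le \frac{2}{|\eta - 1|},
\]
and then invoke the elementary estimate $|e^{2\pi i\theta} - 1| \ge 4\|\theta\|$, where $\|\theta\|$ denotes the distance from $\theta$ to the nearest integer. This gives $\bigl|\sum_n \zeta^{\alpha_n}\bigr| \le (2\|s/p^m\|)^{-1}$. Summing over the $\varphi(p^m)$ roots of order exactly $p^m$ and using that $t \mapsto s = t\,(a \bmod p^m)$ permutes the units modulo $p^m$, the inner sum reduces to $\tfrac14\sum_{s}\|s/p^m\|^{-2}$ over units $s$. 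Since $\|s/p^m\| = \min(s, p^m - s)/p^m$ and each value of $\min(s,p^m-s)$ occurs at most twice, this is dominated by $\tfrac14 \cdot 2p^{2m}\sum_{j\ge 1} j^{-2} = p^{2m}\pi^2/12$.

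The crucial step is the outer summation over $m$, and this is where the precise weight in Theorem \ref{thm: Main} matters. Combining the estimates, the Fourier sum is at most
\[
    \frac{1}{N^2}\sum_{m=1}^\infty \frac{1}{p^{3m}}\cdot\frac{p^{2m}\pi^2}{12} = \frac{\pi^2}{12 N^2}\sum_{m=1}^\infty \frac{1}{p^{m}} = \frac{\pi^2}{12(p-1)}\cdot\frac{1}{N^2},
\]
a convergent series. Note that the inner sum grows like $p^{2m}$, so a weight of $\|\zeta\|^{-2}$ would have produced the divergent $\sum_m 1$, whereas the cube $\|\zeta\|^{-3}$ leaves exactly the geometric tail $\sum_m p^{-m}$. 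Writing the bound as $K(p)N^{-2}$ and taking fourth roots gives $D_N \le C(p)K(p)^{1/4}N^{-1/2}$. I expect the main obstacle to be bookkeeping rather than depth: one must track the effect of the $p$-adic exponential truncation carefully enough to justify the factorization $\zeta^{\alpha_n} = \zeta^b\eta^n$ and the claim that $\eta$ is a primitive $p^m$-th root of unity, and one must use the \emph{uniform} geometric bound (never the trivial bound $N$) so that the resulting constant is genuinely independent of $N$.
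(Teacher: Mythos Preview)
Your proposal is correct and follows essentially the same route as the paper: apply Theorem~\ref{thm: Main}, bound each Weyl sum $\sum_n \zeta^{na+b}$ as a geometric series via $|1-\eta|^{-1}$, use the unit hypothesis on $a$ to ensure $\eta=\zeta^a$ still has order $p^m$ (the paper phrases this as ``$a_k$ generates $\ZZ/p^k\ZZ$''), replace the lower bound on $|1-e^{2\pi i\theta}|$ by the linear estimate $\sin\theta\ge 2\theta/\pi$ (your $|e^{2\pi i\theta}-1|\ge 4\|\theta\|$ is the same inequality), sum $\sum_s \|s/p^m\|^{-2}\ll p^{2m}$, and observe that the weight $\|\zeta\|^{-3}$ leaves the convergent tail $\sum_m p^{-m}$. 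The only cosmetic difference is that the paper drops the coprimality constraint $p\nmid m$ before substituting, whereas you keep it and use that multiplication by $a\bmod p^m$ permutes the units; both give the same bound $p^{2m}\pi^2/12$ and hence $D_N=O(N^{-1/2})$.
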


Some quantitative results on the discrepancy of $p$-adic sequences were done by Beer in \cite{article:MR0237441} and \cite{article:MR0245528}. In particular, the author proves in \cite{article:MR0237441} that the discrepancy of the sequence $na + b$ with $a$ a unit is exactly equal to $D_N = N^{-1}$, the best possible. 

It is not surprising that the LeVeque type inequality gives us a weaker bound, as this is the case in the classical setting on $\RR/\ZZ$. Montogomery in \cite{book:MR1297543} provides a detailed discussion and considers some examples. In particular, the sequence $n\theta$ where $\theta = \frac{1 +\sqrt{5} }{ 2}$ has discrepancy $D_N \ll \log(N)/N$, where as the use of the LeVeque inequality gives only $D_N \ll N^{-2/3}$. 

Our paper is structured as follows. In section \ref{section: fanalysis-padic} we set up the relevant Fourier analysis that is required for our calculations. We prove the main theorem in section \ref{section: Main-theorem}. We analyze the quantitative behavior of the linear sequence $\alpha_n= na + b$ in section \ref{section: Linear-sequence}, and prove Corollary \ref{cor: Linear-sequence}. 


\section{Fourier analysis on $\ZZ_p$}
\label{section: fanalysis-padic} 
If $G$ is a compact abelian group, then the set of all continuous group homomorphisms (or characters) from $G$ to the multiplicative unit circle $\TT = \{z \in \CC \, | \, |z| = 1 \}$ forms a discrete group under multiplication, the Pontyagrin dual group $\widehat{G}$ (see for example \cite{book:MR1038803}). Note that $\ZZ_p$ is a compact abelian group. The next lemma states that the dual group $\widehat{\ZZ}_p$ of $\ZZ_p$ is isomorphic to the Pr\"{u}fer $p$-group $\ZZ(p^\infty)$. The result is known, but we include a proof due to the lack of a suitable reference.  
\begin{lem}
	\label{lem: dualgroup} 
	For each $\zeta \in \ZZ(p^\infty)$, the map $x \mapsto  \zeta^x $ is a character of $\ZZ_p$. Moreover, the map
	\begin{eqnarray*}
		\Psi: \,\, \ZZ(p^\infty) &\longrightarrow  &  \widehat{\ZZ}_p \cr
		\zeta             &\longmapsto &   (x \mapsto \zeta^x )
	\end{eqnarray*}
	is an isomorphism from the  Pr\"{u}fer $p$-group $\ZZ(p^\infty)$ to the Pontyagrin dual group of $\ZZ_p$.
\end{lem}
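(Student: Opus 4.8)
The plan is to treat the two assertions separately: first the well-definedness and character property of each map $x \mapsto \zeta^x$, then the fact that $\Psi$ is an isomorphism, which I would break into the three checks that it is a homomorphism, injective, and surjective. The conceptual engine throughout is the observation that if $\zeta$ has order dividing $p^k$, then $\zeta^x$ depends only on the residue $x \bmod p^k = a_0 + a_1 p + \cdots + a_{k-1}p^{k-1}$; equivalently, $x \mapsto \zeta^x$ is the composite of the reduction homomorphism $\ZZ_p \to \ZZ/p^k\ZZ$ with the standard character $m \mapsto \zeta^m$ of the finite cyclic group $\ZZ/p^k\ZZ$.

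First I would verify that $x \mapsto \zeta^x$ is a character. If $\zeta$ has order $p^n$, then since $\zeta^{p^n}=1$ the value $\zeta^{a_0 + \cdots + a_{n-1}p^{n-1}}$ is unchanged when $x$ is truncated after more than $n$ digits, so the map factors through the reduction $\ZZ_p \to \ZZ/p^n\ZZ$. Reduction is a group homomorphism and $m \mapsto \zeta^m$ is a homomorphism on $\ZZ/p^n\ZZ$, so the composite is a homomorphism into $\TT$; continuity is immediate because the map is constant on the open cosets of $p^n\ZZ_p$. To see that $\Psi$ is a homomorphism, given $\zeta,\eta$ of orders dividing $p^N$ I would reduce $x$ to $m = x \bmod p^N \in \{0,\ldots,p^N-1\}$ and apply the identity $(\zeta\eta)^m = \zeta^m\eta^m$ for ordinary complex exponentiation, which yields $(\zeta\eta)^x = \zeta^x\eta^x$.

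Injectivity is the easy direction: if $\Psi(\zeta)$ is trivial, then evaluating at $x = 1 \in \ZZ_p$ gives $\zeta = \zeta^1 = 1$, so $\ker\Psi = \{1\}$. The substance of the lemma is surjectivity. Given an arbitrary $\chi \in \widehat{\ZZ}_p$, I would invoke the no-small-subgroups property of $\TT$: there is a neighborhood $V$ of $1$ in $\TT$ whose only subgroup is $\{1\}$. By continuity $\chi^{-1}(V)$ is an open neighborhood of $0$, hence contains some basic open subgroup $p^k\ZZ_p$; then $\chi(p^k\ZZ_p)$ is a subgroup of $\TT$ lying in $V$ and must be trivial, so $\chi$ factors through $\ZZ_p/p^k\ZZ_p \cong \ZZ/p^k\ZZ$. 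Setting $\zeta := \chi(1)$, the relation $\chi(p^k)=1$ forces $\zeta^{p^k}=1$, so $\zeta \in \ZZ(p^\infty)$; and writing any $x$ as $m + (x-m)$ with $m = x \bmod p^k$ and $x-m \in p^k\ZZ_p$ gives $\chi(x) = \chi(m) = \zeta^m = \zeta^x = \Psi(\zeta)(x)$.

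I expect surjectivity to be the main obstacle, since it is where topology genuinely enters: one must convert the analytic input (continuity of $\chi$) into the algebraic conclusion that $\chi$ annihilates some $p^k\ZZ_p$, and the clean way to do this is through the no-small-subgroups property of the circle. A structurally cleaner but heavier alternative would be to identify $\ZZ_p = \varprojlim \ZZ/p^k\ZZ$ and quote the fact that Pontryagin duality turns this inverse limit of finite groups into the direct limit $\varinjlim \widehat{\ZZ/p^k\ZZ} = \varinjlim \ZZ/p^k\ZZ = \ZZ(p^\infty)$; I would prefer the elementary argument above in order to keep the proof self-contained.
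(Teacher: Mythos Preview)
Your proposal is correct and follows essentially the same route as the paper: injectivity by evaluating at $x=1$, and surjectivity via the no-small-subgroups property of $\TT$ to force any continuous character to kill some $p^k\ZZ_p$, then recovering $\zeta=\chi(1)$. The only differences are cosmetic---you spell out the homomorphism check for $\Psi$ and use an abstract neighborhood $V$ where the paper uses the explicit ball $\{z:|z-1|<1\}$---and your remark about the inverse/direct limit alternative is an aside the paper does not mention.
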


\begin{proof}
	It is easily shown that the map $x \mapsto \zeta^x$ is a character of $\ZZ_p$.  To show the injectivity of $\Psi$, suppose that $\zeta_1^x = \zeta_2^x$ for all $x$ in $\ZZ_p$. Then picking $x=1$, we get $\zeta_1 = \zeta_2$. 
	
	We need argue that $\Psi$ is surjective. Let $\gamma$ be in the dual group of $\ZZ_p$. Since $\gamma(0) = 1$ and $\gamma$ is continuous, there exists a disc of radius $1/p^n$ centered at zero $D = p^n\ZZ_p$, such that $|\gamma(x) - \gamma(0)|< 1$ for all $x$ in $D$, and we can pick a smallest $n$ such that this is true. Moreover, since $D$ is a subgroup of $\ZZ_p$ we must have that the image $\gamma(D)$ is a subgroup of $\TT$. 
	
	Note that there does not exist any non-trivial subgroup of $\TT$ satisfying the condition $|x -y| < 1$ for all elements $x$ and $y$ in the subgroup.  Hence, we conclude that $\gamma(D) = \{ 1 \}$. 
	
	Now suppose that $\gamma(1) = \zeta = e^{2\pi i \theta}$ for some $\theta$ in $[0, 1)$. Then, $\gamma(p^n) = \gamma(1)^{p^n} = e^{2\pi i p^n \theta} = 1$ or $p^n \theta \in \ZZ$. We conclude that $\theta = \frac{m}{p^n}$, where $p \ndiv m$ by the minimality of $n$. 
	
	For any integer value $k$, we have $\gamma(k) = \gamma(1)^k = \zeta^k$. This completely determines $\gamma$, since we can write $\ZZ_p$ as the union of $p^n$ disjoint balls $\ZZ_p = \cup_{k=0}^{p^n-1} \left( k + p^n\ZZ_p \right)$ and for any $x \in \ZZ_p$ we have $x = k + p^ny$, and $\gamma(x) = \gamma(k)$. We conclude that 
	\[
	\gamma(x) = \zeta^x,
	\]
	for all $x$ in $\ZZ_p$ where $\zeta = e^{\frac{2\pi i m }{p^n}}$.
\end{proof}

Using Lemma ~\ref{lem: dualgroup}, we shall express the Fourier series of any $f \in L^1(\ZZ_p)$ in terms of the elements of $\ZZ(p^\infty)$. As a compact group there exists a normalized Haar measure $\mu$ on $\ZZ_p$ (see for example \cite{book:MR1681462}). Let $f \in L^1(\ZZ_p)$. The  Fourier coefficients of $f$ are given by 
\begin{eqnarray*}
	\hat{f}(\zeta) = \int_{\ZZ^p} f(x) \zeta^{-x} d\mu,      
\end{eqnarray*}
and the Fourier inversion formula gives
\[
f(x) = \sum_{\zeta \in  \ZZ(p^\infty)} \hat{f}(\zeta) \zeta^x,
\] 
whenever $\sum_{\zeta \in  \ZZ(p^\infty)} |\hat{f}(\zeta)| < \infty$. 

A Weyl type criterion holds for equidistribution in $\ZZ_p$. We state it here in terms of the elements of $\ZZ(p^\infty)$, although it holds for a more general class of Riemann integrable functions on $\ZZ_p$ (see \cite{book:MR0419394}).  
\begin{prop}[Weyl's Criterion]
	\label{prop: Weyl}
	A sequence $\{ \alpha_n \}$ is equidistributed in $\ZZ_p$ if and only if for every non-trivial $\zeta$ in $ \ZZ(p^\infty) $ we have
	\[
	\lim_{N \to \infty} \frac{1}{N} \sum_{n=1}^N \zeta^{\alpha_n} = 0.
	\]
\end{prop}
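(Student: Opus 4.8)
The plan is to reduce the equivalence to the single structural fact that, in the ultrametric setting, characters and indicator functions of discs are mutually finite linear combinations of one another, so that every Fourier expansion in sight is a \emph{finite} sum and no analytic approximation is required. The starting observation is that if $\zeta \in \ZZ(p^\infty)$ has order $\|\zeta\| = p^r$, then by the interpretation of $\zeta^x$ fixed in the introduction the value $\zeta^x$ depends only on the first $r$ digits of $x$; that is, $\zeta^x$ is constant on each disc $D(j,1/p^r)$, $0 \le j < p^r$, and hence
\[
\zeta^x = \sum_{j=0}^{p^r - 1} \zeta^j\, \mathbf{1}_{D(j,1/p^r)}(x).
\]
Integrating against $\mu$ and using $\mu(D(j,1/p^r)) = 1/p^r$ gives $\int_{\ZZ_p}\zeta^x\,d\mu = \frac{1}{p^r}\sum_{j=0}^{p^r-1}\zeta^j$, which equals $0$ for $\zeta \ne 1$ (the $p^r$-th roots of unity sum to zero) and $1$ for $\zeta = 1$.

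For the forward direction I would assume $\{\alpha_n\}$ is equidistributed, fix a nontrivial $\zeta$ of order $p^r$, and substitute the expansion above to obtain
\[
\frac{1}{N}\sum_{n=1}^{N}\zeta^{\alpha_n} = \sum_{j=0}^{p^r-1}\zeta^j\, \frac{\left|D(j,1/p^r)\cap\{\alpha_1,\dots,\alpha_N\}\right|}{N}.
\]
Since this is a sum of finitely many terms, I can pass to the limit $N \to \infty$ term by term; by Definition \ref{dfn: equidist} each counting fraction tends to $1/p^r$, so the right-hand side tends to $\frac{1}{p^r}\sum_{j=0}^{p^r-1}\zeta^j = 0$, as required.

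For the converse I would assume the character averages vanish for every nontrivial $\zeta$ and fix a disc $D(a,1/p^k)$. Its indicator $f = \mathbf{1}_{D(a,1/p^k)}$ is constant on the discs of radius $1/p^k$, so it lies in the $p^k$-dimensional space of such step functions; since the $p^k$ characters $\zeta$ with $\|\zeta\| \mid p^k$ also belong to and span this space, $f$ has a \emph{finite} Fourier expansion $f(x) = \sum_{\|\zeta\|\mid p^k}\hat f(\zeta)\,\zeta^x$. Applying this pointwise along the sequence and averaging gives
\[
\frac{\left|D(a,1/p^k)\cap\{\alpha_1,\dots,\alpha_N\}\right|}{N} = \sum_{\|\zeta\|\mid p^k}\hat f(\zeta)\,\frac{1}{N}\sum_{n=1}^{N}\zeta^{\alpha_n}.
\]
Again there are only finitely many terms: the $\zeta = 1$ term contributes $\hat f(1) = \int_{\ZZ_p} f\,d\mu = 1/p^k$, while every other term tends to $0$ by hypothesis, so the counting fraction converges to $1/p^k$. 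This is exactly equidistribution.

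The only genuine content, and hence the step I would treat most carefully, is the two-sided correspondence between disc indicators and characters used above: namely that the characters of order dividing $p^k$ form a basis of the space of functions constant on level-$k$ discs, equivalently that the discrete Fourier transform on $\ZZ_p/p^k\ZZ_p \cong \ZZ/p^k\ZZ$ is invertible. This is finite-dimensional linear algebra rather than analysis, and once it is in place the ultrametric structure eliminates the error-term estimates and uniform approximation by trigonometric polynomials that make the classical Weyl criterion on $\RR/\ZZ$ more delicate.
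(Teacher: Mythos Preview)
The paper does not actually prove this proposition: it is stated as background with a reference to Kuipers and Niederreiter \cite{book:MR0419394}, so there is no proof in the paper to compare your argument against. Your proof is correct and self-contained; the key observation---that characters of order dividing $p^k$ and indicators of level-$k$ discs are finite linear combinations of one another via the discrete Fourier transform on $\ZZ_p/p^k\ZZ_p\cong\ZZ/p^k\ZZ$---is exactly the right structural fact, and it does make the $p$-adic Weyl criterion cleaner than the archimedean one since no trigonometric-polynomial approximation is needed. One cosmetic remark: the paper writes indicator functions as $\Xcal_{D(a,1/p^k)}$ rather than $\mathbf{1}_{D(a,1/p^k)}$, and your computation $\hat f(1)=\mu(D(a,1/p^k))=1/p^k$ together with the vanishing of $\hat f(\zeta)$ for $\|\zeta\|>p^k$ is precisely the content of the paper's Lemma~\ref{lemma: charfun}, which you could cite directly.
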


We denote by $\Xcal_{D(a,R)}(x)$ the characteristic function of the disc $D(a,R)$ centered at $a$ of radius $R$. We have the following change of variables formula, the proof of which is elementary and we omit. 

\begin{prop}
	\label{prop: subformula}
	Let $f: \ZZ_p \to \CC$ be an integrable function. Then 
	\[  \int_{\ZZ_p} \Xcal_{D(a,1/p^k)}(x)f(x) \, d\mu(x) = \frac{1}{p^k} \int_{\ZZ_p} f(a + p^kx) \, d\mu(x). 
	\]
\end{prop}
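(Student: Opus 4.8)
The plan is to establish the identity by an affine change of variables, combined with the scaling behaviour of the normalized Haar measure under dilation by $p^k$. The starting observation is that the factor $\Xcal_{D(a,1/p^k)}$ merely restricts the integration on the left-hand side to the disc $D(a,1/p^k) = a + p^k\ZZ_p$, so that
\[ \int_{\ZZ_p} \Xcal_{D(a,1/p^k)}(x) f(x)\, d\mu(x) = \int_{a+p^k\ZZ_p} f(x)\, d\mu(x). \]
The right-hand side of the proposition, by contrast, is an integral over all of $\ZZ_p$ obtained after substituting $x \mapsto a + p^k x$. My goal is therefore to compute how $\mu$ transforms under the map $\phi: \ZZ_p \to \ZZ_p$, $\phi(x) = a + p^k x$, which is a bijection of $\ZZ_p$ onto $a + p^k\ZZ_p$.

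First I would record the measure of a disc. Writing $\ZZ_p = \bigcup_{j=0}^{p^k-1} D(j,1/p^k)$ as a disjoint union of $p^k$ translates of $p^k\ZZ_p$, translation invariance of $\mu$ forces all of these to have equal measure, and since they partition $\ZZ_p$ we obtain $\mu(p^k\ZZ_p) = 1/p^k$, and likewise $\mu(a + p^k\ZZ_p) = 1/p^k$.

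Next I would identify the pushforward $\phi_*\mu$. Because $\phi$ is the dilation $x \mapsto p^k x$ followed by the translation $y \mapsto a + y$, and translation preserves $\mu$, it is enough to understand the dilation. The measure $\phi_*\mu$ is supported on $a + p^k\ZZ_p$ with total mass $\mu(\ZZ_p) = 1$; by translation invariance it must be a constant multiple of the restriction $\mu|_{a+p^k\ZZ_p}$, and comparing total masses against $\mu(a+p^k\ZZ_p)=1/p^k$ gives $\phi_*\mu = p^k\,\mu|_{a+p^k\ZZ_p}$. Feeding this into the standard change-of-variables formula $\int_{\ZZ_p} (f\circ\phi)\, d\mu = \int f\, d(\phi_*\mu)$ yields
\[ \int_{\ZZ_p} f(a+p^kx)\, d\mu(x) = p^k \int_{a+p^k\ZZ_p} f(x)\, d\mu(x), \]
and dividing through by $p^k$ is precisely the claimed formula.

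The only real obstacle is the measure-theoretic bookkeeping in the middle step: making precise that dilation by $p^k$ scales the Haar measure by the factor $1/p^k$ (the modulus of multiplication by $p^k$ on $\ZZ_p$). This rests on the uniqueness of Haar measure up to a positive scalar, and it is cleanest to verify first on indicator functions of discs, where both sides reduce to a count of cosets, and then extend to a general integrable $f$ by the usual approximation by simple functions and a limiting argument. Everything else is routine once this scaling constant has been pinned down.
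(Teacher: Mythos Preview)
Your argument is correct: the affine change of variables together with the scaling of Haar measure under multiplication by $p^k$ yields the identity exactly as you describe. The paper itself omits the proof, calling it elementary, so there is nothing to compare against; your write-up simply supplies the details the author left out.
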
 
We use Proposition \ref{prop: subformula} to calculate the Fourier coefficients of the characteristic function of a disc.

\begin{lem}
	\label{lemma: charfun}
	The Fourier coefficients of the characteristic function  $\Xcal_{D(a,1/p^k)}(x)$ are
	\[
	\widehat{\Xcal}_{D(a,1/p^k)}(\zeta) = \left\{ 
	\begin{array}{cc}
	\zeta^{-a} p^{-k} &  \text{if \,\,}  \|\zeta\| \leq p^k, \\
	& \\
	0  &    \text{if \,\,}  \|\zeta\| > p^k. 
	\end{array} 
	\right. 
	\]
\end{lem}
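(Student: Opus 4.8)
The plan is to reduce the computation of $\widehat{\Xcal}_{D(a,1/p^k)}(\zeta)$ to the evaluation of a single character integral over all of $\ZZ_p$, for which the answer is dictated by orthogonality. Starting from the definition of the Fourier coefficient, I would apply Proposition~\ref{prop: subformula} with the integrable function taken to be $f(x) = \zeta^{-x}$. Since $\Xcal_{D(a,1/p^k)}$ is exactly the characteristic function appearing in that change-of-variables formula, this turns
\[
\widehat{\Xcal}_{D(a,1/p^k)}(\zeta) = \int_{\ZZ_p} \Xcal_{D(a,1/p^k)}(x)\,\zeta^{-x}\,d\mu(x)
\]
into $\frac{1}{p^k}\int_{\ZZ_p} \zeta^{-(a+p^k x)}\,d\mu(x)$.

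Next I would use that $x \mapsto \zeta^x$ is a character of $\ZZ_p$ (Lemma~\ref{lem: dualgroup}), hence multiplicative, to factor $\zeta^{-(a+p^k x)} = \zeta^{-a}\,\zeta^{-p^k x}$ and pull the constant $\zeta^{-a}$ outside the integral, leaving
\[
\widehat{\Xcal}_{D(a,1/p^k)}(\zeta) = \frac{\zeta^{-a}}{p^k}\int_{\ZZ_p}\zeta^{-p^k x}\,d\mu(x).
\]
The crucial observation is that $x \mapsto \zeta^{-p^k x}$ is again a character of $\ZZ_p$, namely the one associated under the isomorphism $\Psi$ to $\zeta^{-p^k} \in \ZZ(p^\infty)$, and this character is \emph{trivial} exactly when $\zeta^{p^k}=1$. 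Writing $\|\zeta\| = p^n$, we have $\zeta^{p^k}=1$ iff $p^n \mid p^k$ iff $n \le k$, i.e.\ iff $\|\zeta\| \le p^k$.

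It then remains to evaluate $\int_{\ZZ_p}\zeta^{-p^k x}\,d\mu(x)$, which is the heart of the argument. When $\|\zeta\|\le p^k$ the integrand is identically $1$ and the integral equals $\mu(\ZZ_p)=1$, giving $\widehat{\Xcal}_{D(a,1/p^k)}(\zeta)=\zeta^{-a}p^{-k}$. When $\|\zeta\|>p^k$, I would argue that the integral vanishes; concretely, setting $\eta=\zeta^{-p^k}$ (a nontrivial root of unity of order $p^{n-k}$) and partitioning $\ZZ_p$ into the residue discs $j+p^{n-k}\ZZ_p$ for $0\le j< p^{n-k}$, the convention for $\eta^x$ makes the integrand constant equal to $\eta^{j}$ on the $j$-th disc, so the integral collapses to $p^{-(n-k)}\sum_{j=0}^{p^{n-k}-1}\eta^{j}=0$ by the finite geometric sum over a nontrivial $p^{n-k}$-th root of unity. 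Alternatively this is just the orthogonality relation $\int_{\ZZ_p}\chi\,d\mu = 0$ for a nontrivial character $\chi$, which follows from translation invariance of the Haar measure. The main delicacy I anticipate is bookkeeping with the paper's convention for $\zeta^{x}$ at non-integer exponents, to be sure that the factorization $\zeta^{-(a+p^kx)}=\zeta^{-a}\zeta^{-p^kx}$ and the digitwise evaluation of $\eta^{x}$ are legitimate; once the integrand is correctly identified as a character, the dichotomy is immediate.
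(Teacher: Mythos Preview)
Your proposal is correct and follows essentially the same route as the paper: apply Proposition~\ref{prop: subformula} to reduce to $\zeta^{-a}p^{-k}\int_{\ZZ_p}\zeta^{-p^kx}\,d\mu(x)$, then split on whether the character $x\mapsto\zeta^{-p^kx}$ (equivalently, the element $\omega=\zeta^{p^k}\in\ZZ(p^\infty)$) is trivial. The only difference is that you spell out the vanishing of $\int_{\ZZ_p}\omega^{-x}\,d\mu$ for nontrivial $\omega$ via an explicit residue-disc sum, whereas the paper simply invokes it.
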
 

\begin{proof}[Proof of Lemma ~\ref{lemma: charfun}]
	Suppose that $ \|\zeta\| \leq p^k $, then $\zeta^{p^kx} = 1$ for all $x$ in $\ZZ_p$. Therefore, we have 
	\begin{eqnarray*}
		\int_{\ZZ_p} \Xcal_{D(a,1/p^k)}(x) \zeta^{-x} \, d\mu(x) &=& p^{-k} \int_{\ZZ_p} \zeta^{-(a + p^kx)} \, d\mu(x) \cr
		&=&  \zeta^{-a} p^{-k} \int_{\ZZ_p} \zeta^{-p^kx} \, d\mu(x) \cr
		&=& \zeta^{-a} p^{-k}.
	\end{eqnarray*}  
	On the other hand suppose $ \|\zeta\| > p^k$, and let $\omega = \zeta^{p^k}$. Then $\|\omega\| = \|\zeta\| / p^k > 1$ and hence
	\begin{eqnarray*}
		\int_{\ZZ_p} \Xcal_{D(a,1/p^{k})}(x) \zeta^{-x} \,  d\mu(x) &=& \zeta^{-a}p^{-k} \int_{\ZZ_p} \zeta^{-p^kx} \, d\mu(x) \cr 
		&=& \zeta^{-a} p^{-k} \int_{\ZZ_p} \omega^{-x} \, d\mu(x) \cr
		&=& 0.
	\end{eqnarray*}
\end{proof}


\section{Proof of the main theorem}
\label{section:  Main-theorem}

Let $\{\alpha_1,\alpha_2,...,\alpha_N \}$ be a finite sequence in $\ZZ_p$. Define the function $f: \ZZ_p \times \ZZ_p \longrightarrow \RR $ 
\[
f(x,y) = \frac{ \left| \{\alpha_1, \alpha_2,...,\alpha_N \} \, \cap D(x,|y|_p) \right| }{N} - |y|_p, 
\] 
where $D(x,|y|_p)$ is a disc of radius $|y|_p$ centered at $x$. The discrepancy of the points $\{\alpha_1,...,\alpha_N \}$ is then
\[
D_N = \sup_{x, y \in \ZZ_p} \, \left| f(x,y) \right|.
\] 
We suppress the $p$ in $|\cdot|_p$ as it would be clear from the context. 
We can also write 
\begin{eqnarray*}
	f(x,y) &=& \frac{1}{N} \left( \sum_{n=1}^{N} \Xcal_{D(x,|y|)} (\alpha_n) \right) - |y| \cr
	& & \cr
	&=& \frac{1}{N} \left( \sum_{n=1}^{N} \Xcal_{D(\alpha_n, |y|)} (x) \right) - |y|.
\end{eqnarray*} 

Our proof of  Theorem \ref{thm: Main} proceeds as follows. We shall bound the $L^2$ norm $\displaystyle \|f\|_2^2 = \iint_{\ZZ_p^2} |f(x,y)|^2 \, d\mu(x) d\mu(y)$ from below by $D_N^4$ using geometrical arguments, and from above by using Parseval's theorem. The two steps are given below as lemmas

\begin{lem} 
	\label{lemma: bound1}
	The discrepancy $D_N$ is bounded by
	\[  D_N^4 \leq C_1(p) \|f\|_2^2,
	\]
	where $C_1(p)$ is a constant dependent on $p$.
\end{lem}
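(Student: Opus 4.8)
The plan is to bound $\|f\|_2^2$ from below by a constant times $D_N^4$, which is exactly the asserted inequality rewritten. The first step is to make the $L^2$ norm completely explicit. For a fixed $y$ with $|y| = p^{-k}$ the disc $D(x,|y|)$ is just the radius-$p^{-k}$ disc containing $x$, so $f(\cdot,y)$ depends on $y$ only through $k$, and as a function of $x$ it is constant on each of the $p^k$ discs at scale $k$, where it equals the local discrepancy $\Delta(D) := \tfrac1N|\{\alpha_1,\dots,\alpha_N\}\cap D| - \mu(D)$. Splitting the $y$-integral over the shells $\{\,y : |y| = p^{-k}\,\}$, each of measure $\tfrac{p-1}{p}p^{-k}$, and using $\mu(D)=p^{-k}$ for each scale-$k$ disc, I would obtain
\[
\|f\|_2^2 \;=\; \frac{p-1}{p}\sum_{k=0}^{\infty} p^{-2k}\sum_{D} \Delta(D)^2,
\]
where the inner sum runs over the discs $D$ of radius $p^{-k}$.

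Since $D_N = \sup_D |\Delta(D)|$ directly from Definition~\ref{dfn: discrepancy}, it suffices to prove the per-disc estimate $\|f\|_2^2 \ge c(p)\,|\Delta(D)|^4$ for \emph{every} disc $D$ and then pass to the supremum; this also neatly sidesteps any question of whether the supremum defining $D_N$ is actually attained. Because every term in the display is non-negative, I am free to bound $\|f\|_2^2$ below by the contribution of any convenient subcollection of discs. The central difficulty is that one scale-$k$ disc contributes only $\tfrac{p-1}{p}p^{-2k}\Delta(D)^2$, which is far smaller than $\Delta(D)^4$ once $k$ is large; so the extremal disc alone is useless, and the local discrepancy must be \emph{propagated across scales}.

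I would split into two cases by the sign of $\Delta(D)$. If $\Delta(D)=\delta>0$ (an over-populated disc of radius $p^{-k}$), the key observation is that every ancestor $D^{(j)}\supseteq D$ of radius $p^{-j}$, $0\le j\le k$, contains at least as many points as $D$, whence $\Delta(D^{(j)}) \ge s - p^{-j}$ with $s = \tfrac1N|\{\alpha_n\}\cap D| = \delta + p^{-k}$. Thus $\Delta(D^{(j)}) \ge \delta/2$ for every scale $j$ with $p^{-j}\le s-\delta/2$, i.e.\ $j_1\le j\le k$ where $p^{j_1}\approx (s-\delta/2)^{-1}\approx \delta^{-1}$. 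Summing the geometric series $\sum_{j=j_1}^{k} p^{-2j} \gtrsim p^{-2j_1}\approx \delta^{2}$ over these distinct ancestor terms yields $\|f\|_2^2 \gtrsim \delta^2\cdot\delta^2 = \delta^4$, with explicit constant $\tfrac{p-1}{16p^3}$. If instead $\Delta(D)=\delta<0$ (an under-populated disc), non-negativity of the count forces $p^{-k}=\mu(D)\ge|\delta|$, so $D$ automatically lives at a coarse scale and the single term $\tfrac{p-1}{p}p^{-2k}\delta^2 \ge \tfrac{p-1}{p}|\delta|^4$ already suffices.

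Combining the cases gives $\|f\|_2^2 \ge \tfrac{p-1}{16p^3}\,|\Delta(D)|^4$ for every disc, and taking the supremum over $D$ proves the lemma with $C_1(p)=\tfrac{16p^3}{p-1}$. I expect the over-populated case to be the main obstacle: the real content is the scale-propagation step that turns one large local discrepancy into a comparable discrepancy across an entire block of scales, since this is what supplies the missing factor $\delta^2$ beyond the trivial single-disc bound. The under-populated case, by contrast, is essentially immediate from the measure constraint $\mu(D)\ge|\delta|$.
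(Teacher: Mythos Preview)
Your proof is correct and rests on the same two ideas as the paper's: split according to the sign of the local discrepancy, use the constraint $|\delta|\le\mu(D)$ to handle the under-populated case in one line, and in the over-populated case propagate the excess up to a scale of size roughly $\delta$. The execution differs slightly. You first rewrite $\|f\|_2^2$ as the explicit discrete sum $\tfrac{p-1}{p}\sum_k p^{-2k}\sum_D\Delta(D)^2$ and then, for an over-populated disc, sum the contributions of the entire chain of ancestors $D^{(j)}$ with $j_1\le j\le k$; the geometric series is dominated by the coarsest term $j=j_1$, where $p^{-j_1}\asymp\delta$. The paper instead works directly with the double integral and jumps immediately to a single well-chosen scale $R=\lfloor \Delta+|y_0|\rfloor$ (which is precisely your $p^{-j_1}$ up to a factor of $p$), bounding $f$ from below on the region $\{|y|=R/p,\ |x-x_0|\le R/p\}$. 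Your discrete formulation sidesteps the paper's sub-case split $|y_0|<R$ versus $|y_0|=R$ and yields a cleaner constant $C_1(p)=\tfrac{16p^3}{p-1}$ in place of the paper's $\tfrac{p^9}{(p-1)^3}$, but the underlying mechanism is the same.
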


\begin{lem} 
	\label{lemma: bound2}
	The $L^2$ norm of the function $f$ is bounded by
	\[  \|f\|_2^2 \leq C_2(p) \sum_{\zeta \in \ZZ(p^\infty)\backslash \{1 \} }   \frac{1}{\|\zeta\|^3} \left|\frac{1}{N} \sum_{n=1}^{N} \zeta^{x_n} \right|^2.
	\]
	where $C_2(p)$ is a constant dependent on $p$.
\end{lem}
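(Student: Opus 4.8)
The plan is to compute $\|f\|_2^2$ almost exactly, by applying Parseval's identity in the $x$ variable for each fixed $y$ and then integrating over $y$. Fix $y \in \ZZ_p$ with $|y| = 1/p^k$, so that $D(\alpha_n,|y|) = D(\alpha_n,1/p^k)$. Writing $f(\cdot,y) = \frac{1}{N}\sum_{n=1}^N \Xcal_{D(\alpha_n,1/p^k)} - 1/p^k$ and invoking Lemma \ref{lemma: charfun}, the function $f(\cdot,y)$ is a trigonometric polynomial (only the finitely many $\zeta$ with $\|\zeta\| \leq p^k$ contribute), and its Fourier coefficient at a character $\zeta$ is
\[
\widehat{f(\cdot,y)}(\zeta) = \frac{1}{p^k}\left(\frac{1}{N}\sum_{n=1}^N \zeta^{-\alpha_n}\right)
\]
when $\zeta \neq 1$ and $\|\zeta\| \leq p^k$, and $0$ otherwise. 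The essential point is that the constant term $-1/p^k$ exactly cancels the contribution of the trivial character coming from the averaged characteristic functions (for which $\widehat{\Xcal}_{D(\alpha_n,1/p^k)}(1) = 1/p^k$), so that $\zeta = 1$ drops out.

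By Parseval (trivial here, as $f(\cdot,y)$ is a finite sum),
\[
\int_{\ZZ_p} |f(x,y)|^2\, d\mu(x) = \sum_{\substack{\zeta \neq 1 \\ \|\zeta\| \leq p^k}} \frac{1}{p^{2k}} \left|\frac{1}{N}\sum_{n=1}^N \zeta^{\alpha_n}\right|^2,
\]
where I have used that $\sum_n \zeta^{-\alpha_n}$ and $\sum_n \zeta^{\alpha_n}$ have equal modulus. I would then integrate over $y$ by partitioning $\ZZ_p$ into the valuation shells $\{\,|y| = 1/p^k\,\} = p^k\ZZ_p \setminus p^{k+1}\ZZ_p$, each of Haar measure $p^{-k}(1 - p^{-1})$ (the point $y = 0$ being null). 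Since the inner integral depends on $y$ only through $k$, this yields
\[
\|f\|_2^2 = (1 - p^{-1})\sum_{k=0}^\infty p^{-3k} \sum_{\substack{\zeta \neq 1 \\ \|\zeta\| \leq p^k}} \left|\frac{1}{N}\sum_{n=1}^N \zeta^{\alpha_n}\right|^2,
\]
the factor $p^{-3k}$ arising from $p^{-2k}$ in the squared coefficient times $p^{-k}$ in the shell measure.

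Finally I would interchange the two summations, which is legitimate by Tonelli since all terms are non-negative. A character $\zeta$ with $\|\zeta\| = p^m$ contributes precisely for $k \geq m$, and the inner geometric series is $\sum_{k \geq m} p^{-3k} = p^{-3m}/(1 - p^{-3}) = \|\zeta\|^{-3}/(1-p^{-3})$. Collecting constants gives the identity
\[
\|f\|_2^2 = \frac{p^2}{p^2 + p + 1}\sum_{\zeta \in \ZZ(p^\infty)\setminus\{1\}} \frac{1}{\|\zeta\|^3}\left|\frac{1}{N}\sum_{n=1}^N \zeta^{\alpha_n}\right|^2,
\]
which is in fact sharper than the stated inequality, with explicit constant $C_2(p) = p^2/(p^2+p+1) < 1$. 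I do not anticipate a genuine obstacle: the argument is a clean Parseval computation. The only points that demand care are bookkeeping the radius cutoff $\|\zeta\| \leq p^k$, the measure $p^{-k}(1-p^{-1})$ of each shell, and checking that the weight $\|\zeta\|^{-3}$ emerges correctly from the geometric summation.
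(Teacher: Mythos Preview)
Your argument is correct and in fact yields an exact identity with the sharper constant $C_2(p)=p^2/(p^2+p+1)$, whereas the paper only obtains the inequality with $C_2(p)=2p^2$.

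The routes are genuinely different. The paper performs a full two-variable Fourier expansion on $\ZZ_p\times\ZZ_p$: it computes $\hat f(\zeta,\omega)$, shows $\hat f(1,\omega)=0$, and for $\zeta\neq 1$ reduces to $\hat f(\zeta,\omega)=\bigl(\tfrac1N\sum_n\zeta^{-\alpha_n}\bigr)\int_{|y|\le 1/\|\zeta\|}|y|\,\omega^{-y}\,d\mu(y)$. It then needs an auxiliary estimate (Lemma~\ref{lemma: IntegralEst}) bounding $\sum_{\omega}\bigl|\int_{|y|\le R}|y|\,\omega^{-y}\,d\mu(y)\bigr|^2\le 2p^2R^3$ before applying Parseval in both variables. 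Your approach avoids Fourier analysis in $y$ altogether: you apply Parseval only in $x$ for each fixed $y$, observe that the result depends on $y$ solely through $|y|$, and integrate over the valuation shells directly. This sidesteps the auxiliary lemma entirely and turns the inequality into an equality. The paper's method has the conceptual appeal of treating both variables symmetrically via harmonic analysis, but your one-variable-plus-shells computation is shorter, more elementary, and optimal in the constant.
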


The proof of Theorem \ref{thm: Main} then follows by combining Lemmas \ref{lemma: bound1} and \ref{lemma: bound2}. 

\begin{rem}
	For $x >0$, we use the notation $\lfloor x \rfloor $ and $\lceil x \rceil$ to denote 
	\[
	\lfloor x \rfloor = \max\{p^k \, | \, k \in \ZZ,\, p^k \leq x  \}
	\]
	\[
	\lceil x \rceil  = \min\{p^k \, | \, k \in \ZZ,\, x \leq p^k  \}.
	\] \\
	Note that $ \lfloor x \rfloor \leq x < p \lfloor x \rfloor $ and  $\frac{1}{p}\lceil x \rceil < x \leq  \lceil x \rceil   $.
\end{rem}

\begin{proof}[Proof of Lemma ~\ref{lemma: bound1}] 
	Pick a point $(x_0,y_0)$ for which $f(x_0,y_0)$ is not zero. 
	We consider each of the two possibilities $f(x_0,y_0) > 0$ and $f(x_0,y_0) <0$ separately. Our strategy in each case is to find a small neighborhood around the point $(x_0,y_0)$ where $|f(x,y)|$ is bounded away from zero. Using this fact and integrating over this neighborhood, we produce a bound of the form $\|f\|_2^2 \geq C(p) |f(x_0,y_0)|^4$, where $C(p)$ is a constant depending only on $p$. 
	\subsection*{Case 1} Suppose that $\Delta = f(x_0,y_0) > 0.$
	Let $R = \lfloor \Delta +|y_0| \rfloor$. Since, $|y_0| < |y_0| + \Delta$ and $|y_0|$ is in the value group of $\QQ_p$, we have $|y_0| \leq R$. We consider the two cases $|y_0| < R$ and $|y_0| = R$.
	\subsection*{Case 1.1:}
	Suppose that $|y_0| < R$. We must then have $|y_0| \leq \frac{1}{p}R$. If we fix $|y| = \frac{1}{p}R$ and $|x-x_0| \leq \frac{1}{p}R$, then $D(x_0,|y_0|) \subseteq D(x,|y|)$. We get a nonnegative lower bound on $f(x,y)$ as follows 
	\begin{eqnarray*}
		f(x,y) &=& \frac{1}{N} \sum_{n=1}^N \Xcal_{D(x,|y|) }(\alpha_n) - |y| \cr
		&\geq&  \frac{1}{N} \sum_{n=1}^N \Xcal_{D(x_0,|y_0|) }(\alpha_n) - |y| \cr
		&=& |y_0| + f(x_0,y_0) - |y| \cr
		&=&  |y_0| + \Delta - |y| \cr
		&\geq& \left(1 - \frac{1}{p} \right)R. 
	\end{eqnarray*}
	
	We can bound the $L^2$ norm of $f$ from below by evaluating the required integral only on the set $|y| = \frac{1}{p} R, |x-x_0| \leq \frac{1}{p}R $ 
	\begin{eqnarray*}
		\|f\|_2^2  &=&  \iint_{\ZZ_p^2} |f(x,y)|^2 \, d\mu(x)d\mu(y) \cr
		&\geq&   \iint_{|y|=\frac{1}{p}R, |x-x_0| \leq \frac{1}{p}R} |f(x,y)|^2 \, d\mu(x)d\mu(y) \cr
		&\geq &  \iint_{|y|=\frac{1}{p}R, |x-x_0| \leq \frac{1}{p}R} \left( 1 - \frac{1}{p}\right)^2 R^2 \, d\mu(x)d\mu(y) \cr
		&=&  \left( 1 - \frac{1}{p}\right)^3 \frac{1}{p^2} R^4 \cr
		&\geq& \left( 1 - \frac{1}{p}\right)^3  \frac{1}{p^6}\Delta^4 \cr
		&=& \frac{(p-1)^3 }{p^9} \Delta^4,
	\end{eqnarray*}
	using $R = \lfloor \Delta +|y_0| \rfloor \geq \frac{1}{p}(|y_0| + \Delta) \geq \frac{1}{p} \Delta$. 
	
	\subsection*{Case 1.2: } Suppose that $|y_0|=R$. 
	If we let $|y| = R$ and $|x-x_0| \leq R$, then $D(x_0,|y_0|) = D(x,|y|)$. From this, we get
	\begin{eqnarray*}
		f(x,y) &=& \frac{1}{N} \sum_{n=1}^N \Xcal_{D(x,|y|) }(\alpha_n) - |y| \cr
		&=&  \frac{1}{N} \sum_{n=1}^N \Xcal_{D(x_0,|y_0|) }(\alpha_n) - |y_0| \cr
		&=& |y_0| + f(x_0,y_0) - |y| \cr
		&=&  f(x_0,y_0) \cr
		&=& \Delta. 
	\end{eqnarray*}
	Therefore, 
	\begin{eqnarray*}
		\|f\|_2^2  &=&  \iint_{\ZZ_p^2} |f(x,y)|^2 \, d\mu(x)d\mu(y) \cr
		&\geq&   \iint_{|y|= R, |x-x_0| \leq R} \Delta^2 \, d\mu(x)d\mu(y)  \cr
		&=& \left(1- \frac{1}{p} \right) R^2 \Delta^2 \cr  
		&\geq& \left(1- \frac{1}{p} \right) \frac{1}{(p-1)^2} \Delta^4 \cr
		&=& \frac{1}{p(p-1)} \Delta^4,
	\end{eqnarray*}
	using  $R + \Delta = |y_0| + \Delta < p\lfloor |y_0| + \Delta \rfloor = pR$ and therefore $\Delta < (p-1)R$.
	
	Finally, since $\frac{(p-1)^3}{p^9} < \frac{1}{p(p-1)}$ we conclude 
	\[   \| f\|^2 \geq \frac{(p-1)^3}{p^9} \Delta^4 
	\] 
	holds in both cases 1.1 and 1.2, so it holds in general for case 1. 
	\subsection*{Case 2:} Suppose that $f(x_0,y_0) <  0$ and 
	$ \displaystyle \Delta = |f(x_0,y_0)| = -f(x_0,y_0). $
	In other words, the disc $D(x_0,|y_0|)$ contains fewer than the expected number of points $\alpha_n$. 
	
	Now let $R = |y_0|$. Then if $|y| = R$ and $|x-x_0| \leq R$, by the strong triangle inequality $D(x,|y|) = D(x_0,|y_0|) $ and we have 
	\begin{eqnarray*}
		f(x,y) &=& \frac{1}{N} \sum_{n=1}^N \Xcal_{D(x,|y|) }(\alpha_n) - |y| \cr
		&=&  \frac{1}{N} \sum_{n=1}^N \Xcal_{D(x_0,|y_0|) }(\alpha_n) - |y_0| \cr
		&=& f(x_0,y_0).
	\end{eqnarray*}
	Therefore, 
	\begin{eqnarray*}
		\|f\|_2^2  &=& \iint_{\ZZ_p^2} |f(x,y)|^2 \, d\mu(x)d\mu(y) \cr
		&\geq& \iint_{|y|=R, |x-x_0| \leq R} |f(x,y)|^2 \, d\mu(x)d\mu(y) \cr
		&=& \iint_{|y|=R, |x-x_0| \leq R} \Delta^2 \, d\mu(x)d\mu(y) \cr
		&=& \left(1- \frac{1}{p}\right) R^2 \Delta^2 \cr
		&\geq&  \left(1- \frac{1}{p}\right)\Delta^4,
	\end{eqnarray*}
	where the last line follows because $\Delta \leq R$. To see this, note that 
	\begin{eqnarray*}
		\Delta  &=&  - f(x_0,y_0) \cr
		&=& |y_0| -  \frac{1}{N} \sum_{n=1}^N \Xcal_{D(x_0,|y_0|) }(\alpha_n) \cr
		&\leq& |y_0| \cr
		&=& R.  
	\end{eqnarray*}
	
\end{proof}
Next, we need to prove Lemma \ref{lemma: bound2}. Our goal is to find an upper bound on the $L^2$-norm of $f(x,y)$ using Parseval's theorem. Suppose $f(x,y)$ has a Fourier series 
\[
f(x,y) = \sum_{\zeta, \omega \, \in \ZZ(p^\infty) } \hat{f} (\zeta,\omega) \zeta^x \, \omega^y.
\]  
Then by Parseval's theorem we would get 
\[  \|f \|_2^2 =   \sum_{\zeta, \omega \, \in  \ZZ(p^\infty)  } |\hat{f}(\zeta,\omega)|^2.
\]
Therefore, we need to bound the Fourier coefficients of $f(x,y)$. 
The Fourier coefficients are 
\begin{eqnarray}
\label{eqn: FourierCoefficients}
\hat{f}(\zeta,\omega) & = & \iint_{\ZZ_p^2} f(x,y) \, \zeta^{-x}\omega^{-y} \,\, d\mu(x) d\mu(y) \cr
& = & \frac{1}{N} \sum_{n=1}^N \iint_{\ZZ_p^2} \Xcal_{D(\alpha_n, |y|)}(x) \, \zeta^{-x}\omega^{-y} \,\, d\mu(x) d\mu(y) \,\,  \cr
& &  -  \, \iint_{\ZZ_p^2} |y| \, \zeta^{-x}\omega^{-y} \,\, d\mu(x) d\mu(y).
\end{eqnarray}
Note that if $\zeta = 1$ we get 
\begin{eqnarray}
\label{eqn: zerocoefficients} 
\hat{f}(1,\omega) &=& \frac{1}{N} \sum_{n=1}^N \iint_{\ZZ_p^2} \Xcal_{D(\alpha_n, |y|)}(x) \, \omega^{-y} \,\, d\mu(x) d\mu(y) \,\, -  \, \int_{\ZZ_p} |y| \,\omega^{-y} \,d\mu(y) \cr
&=& \frac{1}{N} \sum_{n=1}^N \int_{\ZZ_p} |y| \omega^{-y} \, d\mu(y) \, - \int_{\ZZ_p} |y| \, \omega^{-y} \,d\mu(y) \cr 
&=& 0. 
\end{eqnarray}
When $\zeta \neq 1$, the second integral in line 2 of Equation  (\ref{eqn: FourierCoefficients}) is zero
\begin{eqnarray*}
	\iint_{\ZZ_p^2} |y| \, \zeta^{-x}\omega^{-y} \,\, d\mu(x) d\mu(y) &=& \int_{\ZZ_p} |y| \omega^{-y} \left(\int_{\ZZ_p} \zeta^{-x} \, d\mu(x) \right) \, d\mu(y) \cr
	&=& 0.
\end{eqnarray*}
Therefore, 
\[
\hat{f}(\zeta, \omega) = \frac{1}{N} \sum_{n=1}^N \iint_{\ZZ_p^2} \Xcal_{D(\alpha_n, |y|)}(x) \, \zeta^{-x}\omega^{-y}\,\, d\mu(x) d\mu(y).
\]
Using Lemma \ref{lemma: charfun}, we have 
\[ 
\int_{\ZZ_p} \Xcal_{D(\alpha_n,|y|)}(x) \zeta^{-x} \, d\mu(x)  = \left\{ 
\begin{array}{cc}
\zeta^{-\alpha_n}\,|y|  &  \text{if \,} \|\zeta \| \leq 1/|y|, \\
& \\
0  &    \text{else}.
\end{array} 
\right.
\]
Hence, for $\zeta \neq 1$, 
\[
\hat{f}(\zeta, \omega) = \frac{1}{N} \sum_{n=1}^N \zeta^{-\alpha_n} \, \int_{|y| \leq 1/\|\zeta\| } |y| \, \omega^{-y} \, d\mu(y).
\]
The following lemma makes some estimates that are useful in our succeeding calculations
\begin{lem}
	\label{lemma: IntegralEst}
	Let $R = p^k, k \in \ZZ$ satisfy $0 < R < 1$, and let $\omega \in \ZZ(p^\infty).$ Then,
	\begin{equation}
	\label{eqn: IntegralEst1}
	\left|\int_{|y| \leq R} |y| \, \omega^{-y} \, d\mu(y)  \right| \leq \frac{p}{\max(1/R, \|\omega \|)^2}. 
	\end{equation} 	
	Moreover, 
	\begin{equation}
	\label{eqn: IntegralEst2}
	\sum_{\omega \in \ZZ(p^\infty)} \left|\int_{|y| \leq R} |y| \, \omega^{-y} \, d\mu(y)  \right|^2 \leq 2p^2R^3. 
	\end{equation} 	
\end{lem}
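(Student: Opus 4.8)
The plan is to evaluate the integral $I(\omega) := \int_{|y|\leq R}|y|\,\omega^{-y}\,d\mu(y)$ in closed form and then read both estimates off that expression. Writing $R = p^k$ with $k < 0$ (so that $1/R = p^{-k} \geq p$), the domain $\{|y|\leq R\} = p^{-k}\ZZ_p$ splits into the spheres $S_j = \{|y| = p^{-j}\}$, $j \geq -k$, on each of which the factor $|y| = p^{-j}$ is constant. First I would write
\[
I(\omega) = \sum_{j \geq -k} p^{-j} \int_{S_j} \omega^{-y}\,d\mu(y),
\]
and evaluate each sphere integral as a difference of two ball integrals using $\Xcal_{S_j} = \Xcal_{D(0,p^{-j})} - \Xcal_{D(0,p^{-j-1})}$. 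By Lemma \ref{lemma: charfun} with $a = 0$, the ball integral $\int_{D(0,p^{-j})}\omega^{-y}\,d\mu(y)$ equals $p^{-j}$ when $\|\omega\|\leq p^{j}$ and vanishes otherwise.

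Setting $\|\omega\| = p^{s}$, the sphere integral $\int_{S_j}\omega^{-y}\,d\mu(y)$ therefore equals $p^{-j}(1 - 1/p)$ for $j \geq s$, equals $-p^{-s}$ on the single boundary sphere $j = s-1$, and vanishes for $j < s-1$. I would then split on whether $\|\omega\| \leq 1/R$ or $\|\omega\| > 1/R$. In the first case ($s \leq -k$) every surviving sphere has $j \geq s$, so $I(\omega)$ is a single geometric series summing to $\tfrac{p}{p+1}R^{2}$. In the second case ($s > -k$) only the boundary sphere $j = s-1$ and the tail $j \geq s$ survive, and summing the geometric tail against the boundary term collapses to $-\tfrac{p^{2}}{p+1}\|\omega\|^{-2}$. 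Since $\tfrac{p}{p+1} \leq 1$ and $\tfrac{p^{2}}{p+1} \leq p$, both cases are dominated by $p/\max(1/R,\|\omega\|)^{2}$, which is (\ref{eqn: IntegralEst1}).

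For (\ref{eqn: IntegralEst2}) I would substitute (\ref{eqn: IntegralEst1}) and group the characters by order. There are $p^{s-1}(p-1)$ elements $\omega$ with $\|\omega\| = p^{s}$ for each $s \geq 1$ (together with the single element $\omega = 1$), so the number with $\|\omega\| \leq 1/R$ is exactly $1/R$. The block $\|\omega\|\leq 1/R$ then contributes at most $(1/R)\,(pR^{2})^{2} = p^{2}R^{3}$, while the levels $s > -k$ contribute $\sum_{s > -k} p^{s-1}(p-1)\,(p\,p^{-2s})^{2}$, a geometric series in $p^{-3s}$ whose sum is a constant multiple of $R^{3}$ and hence at most $p^{2}R^{3}$. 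Adding the two blocks yields the claimed bound $2p^{2}R^{3}$.

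The step I expect to be the main obstacle is the bookkeeping at the boundary sphere $j = s-1$ in the regime $\|\omega\| > 1/R$: this is the only shell on which the character $\omega^{-y}$ is nontrivial, so its sphere integral is $-p^{-s}$ rather than the generic $p^{-j}(1-1/p)$, and getting this sign and the resulting constant $\tfrac{p^{2}}{p+1}$ correct is exactly what makes the two branches of (\ref{eqn: IntegralEst1}) line up against the single $\max$ on the right-hand side. The remaining manipulations are routine summations of geometric series.
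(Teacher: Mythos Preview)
Your proposal is correct and follows essentially the same route as the paper: decompose the integral over $\{|y|\le R\}$ into sphere integrals, express each as a difference of two ball integrals, evaluate via Lemma~\ref{lemma: charfun}, split on $\|\omega\|\le 1/R$ versus $\|\omega\|>1/R$ to obtain the exact values $\tfrac{p}{p+1}R^{2}$ and $-\tfrac{p^{2}}{p+1}\|\omega\|^{-2}$, and then for (\ref{eqn: IntegralEst2}) group the characters by order and sum the resulting geometric series. The only difference from the paper is your choice of sign convention for the exponent in $R=p^{k}$.
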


\begin{proof}[Proof of Lemma \ref{lemma: IntegralEst}] 
	Let $R = 1/p^k$ and let $\|\omega\| = p^l$. We have 
	\begin{eqnarray}
	\label{eqn: IntegralEst3}
	\int_{|y| \leq R} |y| \, \omega^{-y} \, d\mu(y) &=& \sum_{j \geq k} \frac{1}{p^j} \int_{|y| = 1/p^j } \omega^{-y} \, d\mu(y) \cr
	&=& \sum_{j \geq k} \frac{1}{p^j} \int_{\ZZ_p} \left(\Xcal_{D(0,1/p^j) }(y) - \Xcal_{D(0,1/p^{j+1}) }(y)\right)
	\omega^{-y} \, d\mu(y).
	\end{eqnarray}
	When $\|\omega\| \leq 1/R$, that is when $l \leq k$, using Lemma \ref{lemma: charfun} and (\ref{eqn: IntegralEst3}) we have 
	\begin{eqnarray*} 
		\int_{|y| \leq R} |y| \, \omega^{-y} \, d\mu(y) &=& \sum_{j \geq k} \frac{1}{p^j}\left(\frac{1}{p^j} - \frac{1}{p^{j+1}} \right) \cr
		&=& \frac{p}{(p+1)(1/R)^2}. 
	\end{eqnarray*}
	Thus (\ref{eqn: IntegralEst1}) holds in this case. If $\|\omega\| > 1/R$, that is when $l \geq k+1$, again using Lemma \ref{lemma: charfun} and (\ref{eqn: IntegralEst3}) we have 
	\begin{eqnarray*} 	
		\int_{|y| \leq R} |y| \, \omega^{-y} \, d\mu(y) &=& \sum_{j \geq l} \frac{1}{p^j}\left(\frac{1}{p^j} - \frac{1}{p^{j+1}} \right) - \frac{1}{p^{l-1}p^l} \cr
		&=& -\frac{p^2}{(p+1)\|\omega\|^2},
	\end{eqnarray*}
	and thus (\ref{eqn: IntegralEst1}) holds in this case as well.
	
	To check (\ref{eqn: IntegralEst2}) , we use the fact that for each $j \geq 1$, the group $\ZZ(p^\infty)$ contains $p^j$ elements of order at most $p^j$, and $p^j - p^{j-1}$ elements of order exactly $p^j$. We then have 
	\begin{eqnarray*}
		\sum_{\omega \in \ZZ(p^\infty)} \left|\int_{|y| \leq R} |y| \, \omega^{-y} \, d\mu(y)  \right|^2 &\leq& p^2 \sum_{\omega \in \ZZ(p^\infty)} \frac{1}{\max(1/R, \|\omega \|)^4} \cr
		&=& p^2\left(\sum_{\|\omega\| \leq 1/R} R^4 + \sum_{\|\omega\| > 1/R}  \frac{1}{\|\omega\|^4}\right) \cr
		&=& p^2  \left(R^3 + \frac{p-1}{p(p^3-1)}R^3 \right) \cr
		&<& 2p^2R^3.
	\end{eqnarray*}
\end{proof} 
Finally, we prove Lemma \ref{lemma: bound2}.
\begin{proof}[Proof of Lemma \ref{lemma: bound2}]
	Applying Parseval's theorem to $f(x,y)$ and using Equations (\ref{eqn: zerocoefficients}) and (\ref{eqn: IntegralEst2}), we conclude
	\begin{eqnarray*}
		\|f\|^2 &=& \sum_{\zeta, \omega \in \ZZ(p^\infty) \atop \zeta \neq 1} |\hat{f}(\zeta,\omega)|^2 \cr
		&=& \sum_{\zeta \in \ZZ(p^\infty) \atop \zeta \neq 1}
		\left( \sum_{\omega \in \ZZ(p^\infty)} 
		\left| \int_{|y| \leq 1/\|\zeta\|} |y|\omega^{-y} \, d\mu(y) \right|^2 \right) \left|\frac{1}{N} \sum_{n= 1}^N \zeta^{\alpha_n} \right|^2 \cr
		&\leq& 2p^2 \sum_{\zeta \in \ZZ(p^\infty) \atop \zeta \neq 1}
		\frac{1}{\| \zeta\|^3} \left|\frac{1}{N} \sum_{n= 1}^N \zeta^{\alpha_n} \right|^2.
	\end{eqnarray*}
	
\end{proof}


\section{The linear sequence $na+b$ in $\ZZ_p$}
\label{section: Linear-sequence}
Consider the sequence $\alpha_n = na + b $. We have the following proposition, a proof of which is given in \cite{book:MR0419394} using elementary number theory. We present an alternate proof using Fourier analysis.
\begin{prop}
	\label{prop: linear-sequence}
	The sequence $\alpha_n = n a + b$ is equidistributed in $\ZZ_p$ if and only if $a$ is a unit in $\ZZ_p$.
\end{prop}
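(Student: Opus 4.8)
The plan is to invoke Weyl's criterion (Proposition \ref{prop: Weyl}) and thereby reduce the statement to the behavior of the exponential sums $\frac{1}{N}\sum_{n=1}^N \zeta^{na+b}$ for non-trivial $\zeta \in \ZZ(p^\infty)$. Since $x \mapsto \zeta^x$ is a character of the additive group $\ZZ_p$ by Lemma \ref{lem: dualgroup}, it is multiplicative, and so $\zeta^{na+b} = \zeta^b (\zeta^a)^n$. The whole argument then hinges on a single dichotomy: whether $\zeta^a = 1$ or $\zeta^a \neq 1$. I would first record that, by the truncation convention, "$a$ is a unit" ought to translate precisely into "$\zeta^a \neq 1$ for every non-trivial $\zeta$", and organize both directions around this equivalence.

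For the \emph{if} direction, suppose $a$ is a unit and fix a non-trivial $\zeta$ of order $\|\zeta\| = p^m$ with $m \geq 1$. Writing $a = a_0 + a_1 p + \cdots$ with $a_0 \neq 0$ (which is what it means for $a$ to be a unit), the truncation convention gives $\zeta^a = \zeta^{a'}$ where $a' = a_0 + \cdots + a_{m-1}p^{m-1}$ is coprime to $p$; hence $p^m \nmid a'$, so $\zeta^a$ still has order $p^m$ and in particular $\zeta^a \neq 1$. The sum $\sum_{n=1}^N (\zeta^a)^n$ is then a geometric series whose magnitude is bounded by $2/|\zeta^a - 1|$, a quantity independent of $N$, so that $\frac{1}{N}\sum_{n=1}^N \zeta^{na+b} \to 0$. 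As this holds for every non-trivial $\zeta$, Weyl's criterion yields equidistribution.

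For the \emph{only if} direction I would argue the contrapositive. If $a$ is not a unit then $|a|_p < 1$, i.e. $a_0 = 0$. Choosing $\zeta$ of order exactly $p$ (so $m = 1$, and the truncation keeps only the digit $a_0$), one gets $\zeta^a = \zeta^{a_0} = 1$, whence $\zeta^{na+b} = \zeta^b$ is constant in $n$. Thus $\frac{1}{N}\sum_{n=1}^N \zeta^{na+b} = \zeta^b$, which has modulus $1$ and in particular does not tend to $0$. This single non-trivial $\zeta$ witnesses the failure of Weyl's criterion, so the sequence is not equidistributed.

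The routine parts are the geometric-sum estimate and the bookkeeping of the $p$-adic expansion; the one genuinely delicate point is the interpretation of $\zeta^a$ through the truncation convention, which must be handled carefully so that the unit condition matches $\zeta^a \neq 1$ exactly. I expect that pinning down the order of $\zeta^a$ (equivalently, verifying $p^m \nmid a'$) will be the main thing requiring attention, with everything else following formally from Weyl's criterion and the character property.
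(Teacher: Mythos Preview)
Your proposal is correct and follows essentially the same approach as the paper: both directions are handled via Weyl's criterion, with the reverse implication using the geometric-series bound on $\sum_{n=1}^N(\zeta^a)^n$ once one checks $\zeta^a\neq 1$, and the forward implication done by contrapositive by exhibiting a non-trivial $\zeta$ with $\zeta^a=1$. The only cosmetic difference is that the paper pushes the geometric-series estimate one step further to the explicit bound $\frac{1}{N}\,|\sin(\pi m a_k/p^k)|^{-1}$, which it later reuses in the proof of Corollary~\ref{cor: Linear-sequence}; for the proposition itself your cleaner $2/|\zeta^a-1|$ bound suffices.
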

\begin{proof} 
	The forward implication follows from Weyls criterion (Proposition \ref{prop: Weyl}). For suppose, $a$ was not a unit. Then $a = p^kc$, where $k>0$ and $c$ is a unit. Now let $\zeta = e^{2\pi i/p^k}$.  Then $\zeta^a =1$, and Weyl's criterion will not hold. 
	
	For the reverse implication, let $\zeta \in \ZZ(p^\infty)$ with $\| \zeta \| = p^k$ for $k \geq 1$. There exists an $m$ such that $1 \leq m < p^k $, with $p \ndiv m$ and $\displaystyle \zeta = e^{2\pi im/p^k}$. Suppose that $a $ is a unit in $\ZZ_p$. Let $a = t_0 + t_1p + t_2p^2 +...$ be the canonical expansion of $a$, with $t_0 \neq 0$.  Then we let $a_k = t_0 + t_1p + ... + t_{k-1}p^{k-1}$ be the truncation of this expansion to the first $k$ terms. We have 
	\begin{eqnarray}
	\frac{1}{N}\left| \sum_{n=1}^{N} \zeta^{na + b} \right|
	&=&  \frac{1}{N}\left| \sum_{n=1}^{N} \zeta^{na} \right| \cr
	&  & \cr
	&=&    \frac{1}{N} \left| \frac{ 1-   \zeta^{(N+1)a_k} }{1 -  \zeta^{a_k}}  \right| \cr
	& & \cr   \label{eqn: lin-sequence}
	&\leq& \frac{1}{N} \frac{2}{\left|1 -  \zeta^{a_k}  \right| } \cr
	&\leq& \frac{1}{N} \left| \frac{1}{\sin(\pi m a_k/p^k)} \right|.
	\end{eqnarray}
Since $p \ndiv m$ and $p \ndiv a_k$, $\displaystyle \sin(\pi m a_k/p^k) \neq 0$ and hence $\frac{1}{N} \sum_{n=1}^{N} \zeta^{na + b}  \to 0$ as $N \to \infty$; the proof of equidistribution now follows from Weyl's criterion. 
\end{proof}

\begin{proof}[Proof of Corollary~\ref{cor: Linear-sequence}]
	Applying the bound given by Theorem \ref{thm: Main}  we get
	\begin{eqnarray}
	\label{eqn: CorEqn1}
	D_N^4 &\ll& \sum_{\zeta \in \ZZ(p^\infty)\backslash \{1\}  }   \frac{1}{\|\zeta \|^3} \left|\frac{1}{N} \sum_{n=1}^{N} \zeta^{  na + b} \right|^2 \cr
	& & \cr
	&\leq& \frac{1}{N^2} \sum_{k=1}^\infty \frac{1}{p^{3k}}  \sum_{1 \leq m < p^k \atop p \nmid m} \frac{1}{ \left|\sin \left(\pi \, m a_k/ p^k \right) \right|^2} \cr
	& & \cr
	&\leq& \frac{1}{N^2} \sum_{k=1}^\infty \frac{1}{p^{3k}}  \sum_{1 \leq m < p^k} \frac{1}{ \left|\sin \left(\pi \, m a_k/ p^k \right) \right|^2} \cr
	& & \cr
	&\leq& \frac{1}{N^2} \sum_{k=1}^\infty \frac{1}{p^{3k}}  \sum_{1 \leq l < p^k} \frac{1}{ \left|\sin \left(\pi \, l/ p^k \right) \right|^2} \cr
	& & \cr
	&\leq& \frac{2}{N^2} \sum_{k=1}^\infty \frac{1}{p^{3k}}  \sum_{1 \leq l \leq p^k/2} \frac{1}{ \left|\sin \left(\pi \, l/ p^k \right) \right|^2}.
	\end{eqnarray}
	
	Note that the second inequality in Equation (\ref{eqn: CorEqn1}) comes from the last inequality in (\ref{eqn: lin-sequence}). For the fourth inequality, note that since $a$ is a unit we have $p \ndiv a_k$. Hence, $\gcd(a_k,p^k) =1$ and so $a_k$ generates $\ZZ/p^k\ZZ$. That is, $\ZZ/p^k\ZZ = \left\{ma_k \, | \, m = 0,..,p^k-1 \right\}$. The final inequality follows from the identities $|\sin(\theta)| = |\sin(-\theta)| = |\sin(\pi - \theta)|$, so that for $p^k/2 \leq l <p^k$ we have $|\sin(\pi l /p^k)| = |\sin(\pi(p^k-l)/p^k) |$. This allows us to double the sum over the first half of the interval. 
	
	Note that in the interval $[0,\, \pi/2]$, $\sin(\theta)$ is bounded from below by $2\theta/\pi$, so that 
	\[  \frac{1}{|\sin(\theta)|} \leq \frac{\pi}{2\theta}. 
	\]
	This gives us 
	\begin{eqnarray} 
	\label{eqn: CorEqn2}
	\sum_{1 \leq l \leq p^k/2} \frac{1}{ \left|\sin \left(\pi \, l/ p^k \right) \right|^2} 
	&\leq&  \sum_{1 \leq l \leq p^k/2} \frac{p^{2k} }{4 l^2 } \cr
	& & \cr
	&\leq& \frac{p^{2k}}{4} \sum_{1 \leq l <\infty} \frac{1}{l^2} \cr
	&  & \cr
	&\leq& \frac{p^{2k} \pi^2}{24}. 
	\end{eqnarray} 
	
	Finally, applying the bound from (\ref{eqn: CorEqn2}) to (\ref{eqn: CorEqn1}) we get
	\begin{eqnarray*}
		D_N^4  &\ll& \frac{\pi^2}{12N^2} \sum_{k=1}^{\infty} \frac{1}{p^{k}}.
	\end{eqnarray*}
	We conclude that $D_N =  O\left(\frac{1}{\sqrt{N}}\right)$.
\end{proof}


\bibliographystyle{siam}
\bibliography{Equidistribution-Zp-paper}

\end{document}